\documentclass[a4paper,notitlepage,twoside,leqno,10pt]{amsart}

\usepackage{bbm,pifont,latexsym}

\usepackage{dcolumn,indentfirst}
\usepackage[bookmarksopen=true,linktocpage=true,pdfstartview={XYZ null null 1.25}]{hyperref}
\usepackage{amsmath,amssymb,amscd,amsthm,amsfonts,mathrsfs}
\usepackage{color,graphicx,xcolor,graphics,subfigure,extarrows,caption2}
\usepackage{titletoc}

\newtheorem{thm}{Theorem}[section]
\newtheorem{lema}[thm]{Lemma}

\newtheorem{prop}[thm]{Proposition}
\newtheorem*{mainthm}{Main Theorem}


\theoremstyle{definition}

\theoremstyle{remark}
\newtheorem*{rmk}{Remark}


\newcommand{\EC}{\widehat{\mathbb{C}}}
\newcommand{\C}{\mathbb{C}}
\newcommand{\D}{\mathbb{D}}

\newcommand{\N}{\mathbb{N}}

\newcommand{\R}{\mathbb{R}}

\newcommand{\Z}{\mathbb{Z}}

\newcommand{\MA}{\mathcal{A}}

\newcommand{\MN}{\mathcal{N}}

\newcommand{\ii}{\textup{i}}
\newcommand{\re}{\textup{Re}}
\newcommand{\im}{\textup{Im}}

\newcommand{\diam}{\textup{diam}}


\begin{document}

\title[Cantor Julia sets with Hausdorff dimension two]{Cantor Julia sets with Hausdorff dimension two}

\author[FEI YANG]{FEI YANG}
\address{Department of Mathematics, Nanjing University, Nanjing 210093, P. R. China}
\email{yangfei@nju.edu.cn}

\begin{abstract}
We prove the existence of Cantor Julia sets with Hausdorff dimension two. In particular, such examples can be found in cubic polynomials. The proof is based on the characterization of the parameter spaces and dynamical planes of cubic polynomials by Branner-Hubbard, and the parabolic bifurcation theory developed by Shishikura in 1990s.
\end{abstract}

\subjclass[2010]{Primary 37F45; Secondary 37F20, 37F10}


\date{\today}



\maketitle


\section{Introduction}\label{introduction}

A nonempty subset $X$ in $\C$ or $\EC$ is called a \textit{Cantor set} if $X$ is compact, perfect and totally disconnected. As the chaotic sets of holomorphic dynamics, the Julia sets can exhibit rich topological structures. In particular, some of them are Cantor sets. It was shown by Fatou and Julia in 1920s that if all the critical points of a polynomial escape to the infinity under iteration, then the Julia set of this polynomial is a Cantor set (\cite{Fat20}, \cite{Jul18}). There are also some criterions to obtain the Cantor Julia sets for rational maps (see \cite{Mil93} and \cite{Yin92}). The first examples of transcendental meromorphic functions with Cantor Julia sets was produced by Devaney and Keen in \cite{DK88} (see also \cite{Kos10} for different examples).

It was known that for any $s\in[0,2]$, there is a Cantor set in $\C$ whose Hausdorff dimension is exactly $s$ (see \cite{Kee86} for example). However, for the Cantor \textit{Julia} sets, this result does not hold. Indeed, the Hausdorff dimension of the Julia set of any non-constant meromorphic function (not a M\"{o}bius map) is positive (see \cite{Gar78}, \cite{Sta94} and the references therein). According to \cite{BZ96} and \cite{Shi98}, the Hausdorff dimension of the Cantor Julia sets of quadratic polynomials can take any value in $(0,2)$. But the existence of the Cantor Julia sets with full Hausdorff dimension was unknown. Our main purpose in this article is to fill this gap.

\begin{mainthm}\label{thm-dim-2}
There exist cubic polynomials whose Julia sets are Cantor sets having Hausdorff dimension two.
\end{mainthm}

This is the simplest kind of examples one can find such that the Cantor Julia sets have full Hausdorff dimension. Indeed, if a quadratic rational map has a Cantor Julia set then this Julia set cannot contain any critical points (\cite{Mil93} and \cite{Yin92}) and always have Hausdorff dimension strictly less than two.

It was known from \cite[Theorem 5.9]{BH92} that the Cantor Julia sets of cubic polynomials have zero Lebesgue area. Hence we have found some Julia sets of polynomials such that they have zero area but have full Hausdorff dimension. Such kind of examples was known very early in the family of exponential maps \cite{McM87}, and was recently constructed for some Feigenbaum polynomials (\cite{AL08} and \cite{AL15}).

In order to find the Cantor Julia sets with full dimension, we should first exclude the rational maps in the ``shift locus" since the maps there are hyperbolic and the corresponding Julia sets are Cantor sets with Hausdorff dimension strictly less than two. Hence the natural candidates in the Main Theorem are the maps on the boundary of the cubic shift locus.

Let $P_{a,b}$ be the \textit{monic} and \textit{centered} cubic polynomial which is defined by
\begin{equation*}
P_{a,b}(z)=z^3-3a^2z+b, \text{ where } a,b\in\C.
\end{equation*}
According to B\"{o}ttcher's theorem, there exists a unique conformal isomorphism $\varphi_{a,b}$ defined in a neighborhood of $\infty$ which is tangent to the identity at $\infty$ such that $\varphi_{a,b}$ conjugates $P_{a,b}$ to $\zeta\mapsto\zeta^3$. The \textit{\textit{potential function}}
\begin{equation*}
h_{a,b}(z)=\lim_{k\to\infty}\frac{1}{3^k}\log_+|P_{a,b}^{\circ k}(z)|
\end{equation*}
is defined in $\C$ and coincides with $\log|\varphi_{a,b}(z)|$ in a neighborhood of $\infty$. Moreover, $h_{a,b}(z)>0$ if and only if $z$ escapes to $\infty$ under iteration. For $z_0\in\C$, we denote
\begin{equation*}
U_{a,b}(z_0)=\{z\in\C:h_{a,b}(z)>h_{a,b}(z_0)\}.
\end{equation*}

The map $P_{a,b}$ has two critical points $\pm a$ whose forward orbits determine the dynamics essentially. In order to obtain the Cantor Julia set, we need to assume that one critical orbit of $P_{a,b}$ is unbounded. For convenience we assume that $h_{a,b}(a)>0$ and $h_{a,b}(-a)< h_{a,b}(a)$. It is well known that the map $\varphi_{a,b}$ can be extended to a conformal isomorphism $\varphi_{a,b}:U_{a,b}(a)\to\C\setminus\overline{\D}_r$ for some $r>1$, where $\D_r:=\{z\in\C:|z|<r\}$. In particular, $\lim_{z\to -2a}\varphi_{a,b}(z)$ is well-defined, where $P_{a,b}(-2a)=P_{a,b}(a)$ and $-2a\in\partial U_{a,b}(a)$. For each fixed $\zeta\in\C\setminus\overline{\D}$, we define
\begin{equation}\label{equ:L-plus}
\mathscr{L}^+(\zeta):=\{(a,b)\in\C^2: \lim_{z\to -2a}\varphi_{a,b}(z)=\zeta \text{ and }h_{a,b}(-a)<h_{a,b}(a)\}.
\end{equation}
It was shown by Branner-Hubbard that for each $\zeta\in\C\setminus\overline{\D}$, $\mathscr{L}^+(\zeta)$ is homeomorphic to a disk, which is a leaf of a clover \cite[Corollary 13.3]{BH88}.

\begin{figure}[!htpb]
  \setlength{\unitlength}{1mm}
  \centering
  \includegraphics[width=0.32\textwidth]{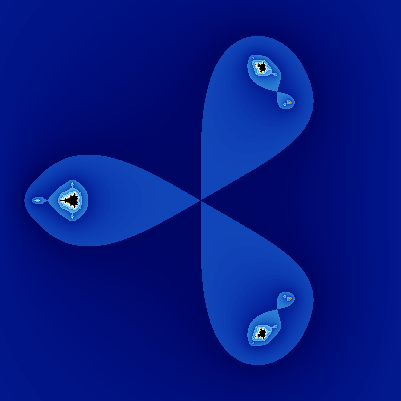}\hskip0.2cm
  \includegraphics[width=0.32\textwidth]{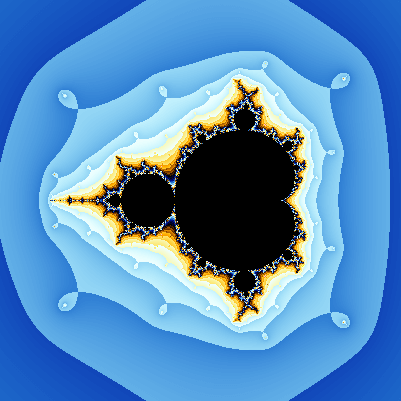}\hskip0.2cm
  \includegraphics[width=0.32\textwidth]{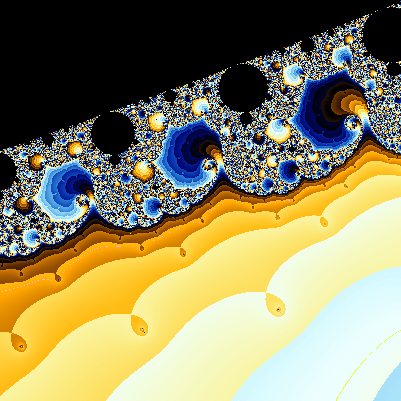}\vskip0.2cm
  \includegraphics[width=0.32\textwidth]{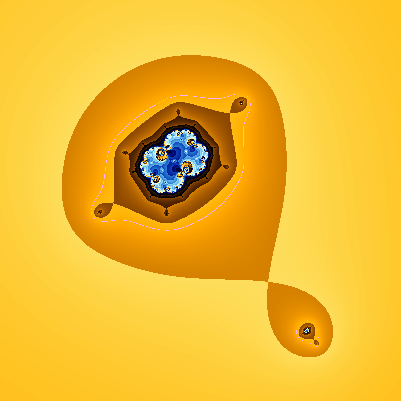}\hskip0.2cm
  \includegraphics[width=0.32\textwidth]{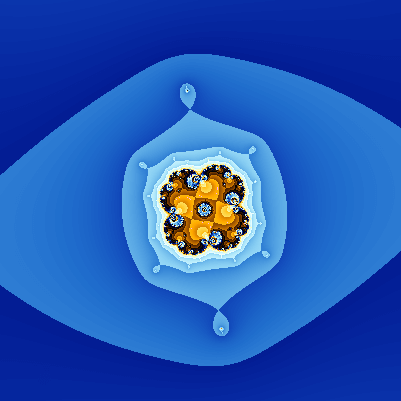}\hskip0.2cm
  \includegraphics[width=0.32\textwidth]{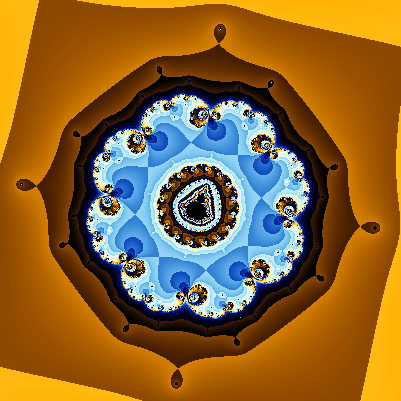}
  \caption{The space $\mathscr{L}^+(\zeta)$ for some $\zeta>1$. The set $\mathscr{B}^+(\zeta)\subset\mathscr{L}^+(\zeta)$ has been drawn and zoomed in several times. The copies of the Mandelbrot set and some decorations of the point components of $\mathscr{B}^+(\zeta)$ can be seen clearly.}
  \label{Fig_M-Cantor}
\end{figure}

To get the potential full Hausdorff dimension of the Cantor Julia set, $P_{a,b}$ needs to have a bounded critical orbit. Define
\begin{equation}\label{equ:B-plus}
\mathscr{B}^+(\zeta):=\{(a,b)\in\mathscr{L}^+(\zeta): h_{a,b}(-a)=0\}.
\end{equation}
Each component of $\mathscr{B}^+(\zeta)$ is either a point or a copy of the Mandelbrot set (Theorem \ref{thm:pt-or-copy}). Moreover, all these Mandelbrot copies are accumulated by the point components of $\mathscr{B}^+(\zeta)$ (Proposition \ref{prop:accum}). See Figure \ref{Fig_M-Cantor}.

\vskip0.2cm
For the proof of the Main Theorem, we fix $\mathscr{L}^+(\zeta)$ for some $\zeta\in\C\setminus\overline{\D}$. Then $\mathscr{B}^+(\zeta)$ consists of countably many components $(M^n)_{n\geq 1}$ which are the copies of the Mandelbrot sets and uncountably many components which are points (see Theorem \ref{thm:pt-or-copy}). We begin with a parabolic polynomial $P_{a_1,b_1}$ in $M^1$. According to Shishikura (see Theorem \ref{thm-shishi}), one can find a punctured ``neighborhood" $U_1$ of $(a_1,b_1)$ in $\mathscr{L}^+(\zeta)$ such that the Julia sets of all $P_{a,b}$ in $U_1$ have Hausdorff dimension at least $1$. Next we find a parabolic polynomial $P_{a_2,b_2}$ (which is not in $M^1$) in $U_1$ and find a punctured ``neighborhood" $U_2\Subset U_1$ of $(a_2,b_2)$ in $\mathscr{L}^+(\zeta)$ such that the Julia sets of all $P_{a,b}$ in $U_2$ have Hausdorff dimension at least $3/2$. Inductively, one can choose a sequence of parabolic parameters $(a_n,b_n)_{n\geq 1}\subset\partial\mathscr{B}^+(\zeta)$ and a sequence of nested ``neighborhoods" $(U_n)_{n\geq 1}$ of $(a_n,b_n)_{n\geq 1}$ in $\mathscr{L}^+(\zeta)$ with $U_n\Subset U_{n-1}$ such that $(a_n,b_n)$ is not in $M^1\cup\cdots\cup M^{n-1}$, and the Julia sets of all $P_{a,b}$ in $U_n$ have Hausdorff dimension at least $2-1/n$. Therefore, we can obtain a limit $(a^*,b^*)$ such that the Julia set of $P_{a^*,b^*}$ has Hausdorff dimension two. By the choice of $(a_n,b_n)$, the limit $(a^*,b^*)$ is not contained in any copies of the Mandelbrot set of $\mathscr{B}^+(\zeta)$ and the Julia set of $P_{a^*,b^*}$ must be a Cantor set.

In fact, by the density of the parabolic parameters on the bifurcation locus of $P_{a,b}$ in $\mathscr{L}^+(\zeta)$, we can show that the parameters such that the Cantor Julia sets have Hausdorff dimension two are dense in $\partial\mathscr{B}^+(\zeta)$ (Theorem \ref{thm-dense}). One can compare this result with Shishikura's theorem on the density of the parameters on the boundary of the Mandelbrot set such that the quadratic Julia sets have full Hausdorff dimension \cite{Shi98}.

Naturally, a further problem is find the Cantor Julia sets with positive area. However, this kind of Cantor Julia sets may not be exist (see \cite{YZ10}). One can refer to \cite{Zha13} for the study of the dimensions on Cantor Julia sets.

\vskip0.2cm
\textit{Organization of the article}. In \S\ref{sec-BH} we characterize the structure of the connected components of $\mathscr{B}^+(\zeta)$ based on the work of Branner-Hubbard. This guarantees that the desired perturbations in the proof of the Main Theorem are feasible. In \S\ref{sec-Shi} we recall Shishikura's result on the Hausdorff dimension and complete the proof of the Main Theorem. Finally in \S\ref{sec-exam} we give a brief argument to generalize the Main Theorem to some higher degree polynomials and some bi-parameter rational maps.

\vskip0.2cm
\textit{Acknowledgements}. I would like to thank Yongcheng Yin for introducing me to these questions. This work is supported by NSFC under grant No.\,11671092.

\section{Slices of the parameter spaces of cubic polynomials}\label{sec-BH}

For a polynomial $P$ with degree at least two, the \textit{filled-in Julia set} $K_P$ is defined as the set of points with bounded orbit under the iteration of $P$. For the cubic polynomial $P_{a,b}$, the filled-in Julia set has the following equivalent definition
\begin{equation*}
K_{a,b}:=\{z\in\C:h_{a,b}(z)=0\},
\end{equation*}
where $h_{a,b}$ is the potential function defined in the introduction. For any $z\in K_{a,b}$, we denote by $K_{a,b}(z)$ the connected component of $K_{a,b}$ containing $z$. A component $K_{a,b}(z)$ is called \textit{periodic} if $P_{a,b}^{\circ k}(K_{a,b}(z))=K_{a,b}(z)$ for some $k\geq 1$.

Recall that $\mathscr{B}^+(\zeta)$ is defined in \eqref{equ:B-plus} for $\zeta\in\C\setminus\overline{\D}$. For $(a,b)\in \mathscr{B}^+(\zeta)$, we denote by $\mathscr{B}_{a,b}^+(\zeta)$ the connected component of $\mathscr{B}^+(\zeta)$ containing $(a,b)$. The following theorem was proved in \cite[Theorems 5.2 and 9.1]{BH92}.

\begin{thm}[{Branner-Hubbard}]\label{thm:pt-or-copy}
For any $(a,b)\in\mathscr{B}^+(\zeta)$ with $\zeta\in\C\setminus\overline{\D}$, there is a following dichotomy
\begin{enumerate}
\item If $K_{a,b}(-a)$ is periodic, then $\mathscr{B}_{a,b}^+(\zeta)$ is homeomorphic to the Mandelbrot set $M$ and $K_{a,b}(-a)$ is a quasiconformal copy of the filled-in Julia set of $z\mapsto z^2+c$ for some $c\in M$;
\item If $K_{a,b}(-a)$ is not periodic, then $\mathscr{B}_{a,b}^+(\zeta)$ is a point and the Julia set of $P_{a,b}$ is a Cantor set.
\end{enumerate}
Moreover, $\mathscr{B}^+(\zeta)$ has countably many of the first kind (non-degenerated) components and uncountably many of the second (degenerated).
\end{thm}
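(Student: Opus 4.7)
The plan is to split on whether $K_{a,b}(-a)$ is periodic and to invoke two different branches of the Branner--Hubbard machinery developed in \cite{BH88, BH92}, together with Douady--Hubbard polynomial-like renormalization, and then count components.

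\emph{Periodic case.} Assume $P_{a,b}^{\circ k}(K_{a,b}(-a))=K_{a,b}(-a)$. First I would produce a quadratic-like restriction around $K_{a,b}(-a)$: at a sufficiently deep level of the Branner--Hubbard puzzle (bounded by an equipotential of $h_{a,b}$ together with external rays landing at repelling preperiodic points), let $V$ be the puzzle piece containing $K_{a,b}(-a)$ and let $U$ be the component of $P_{a,b}^{-k}(V)$ containing $-a$. Because the other critical point $+a$ lies in $U_{a,b}(a)$ and its orbit stays escaping, $U$ is compactly contained in $V$ and contains only $-a$ among critical points of $P_{a,b}^{\circ k}$, so the restriction $P_{a,b}^{\circ k}\colon U\to V$ has degree two. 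Douady--Hubbard straightening then yields $c\in M$ and a quasiconformal conjugacy $K_{a,b}(-a)\to K_c$. To promote this to the homeomorphism $\mathscr{B}_{a,b}^+(\zeta)\cong M$, I would move $(a,b)$ through $\mathscr{B}_{a,b}^+(\zeta)$ while keeping the external combinatorics of $V$ constant: the escaping critical orbit is anchored by the normalization $\varphi_{a,b}(-2a)=\zeta$, and the landing points of the rays bounding $V$ move holomorphically. This exhibits the restrictions as a Mandelbrot-like family in the sense of Douady--Hubbard; its straightening is the advertised homeomorphism.

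\emph{Non-periodic case.} Here I would run the Branner--Hubbard tableau argument from \cite{BH92}. Pulling back the puzzle produces a nested sequence $P_0(-a)\supset P_1(-a)\supset\cdots$ of puzzle pieces around $-a$ whose intersection is $K_{a,b}(-a)$. The tableau dichotomy asserts that either the critical tableau is periodic (which is case (a) above), or the annular moduli satisfy
\begin{equation*}
\sum_{n\geq 0}\textup{mod}\bigl(P_n(-a)\setminus\overline{P_{n+1}(-a)}\bigr)=\infty.
\end{equation*}
In the divergent alternative, Gr\"otzsch forces $K_{a,b}(-a)=\{-a\}$, and pulling back to all other bounded components (each eventually mapped to the critical one) shows that every component of $K_{a,b}$ is a point, so $J_{a,b}=K_{a,b}$ is a Cantor set. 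For the parameter-space conclusion, I would transport these annuli to $\mathscr{L}^+(\zeta)$ via the holomorphic motion of the equipotentials and preperiodic rays defining the puzzle: because $\zeta$ is fixed and the bounding landing points move holomorphically on $\mathscr{B}_{a,b}^+(\zeta)$, the construction yields a nested family of topological disks in parameter space with the same moduli, whose intersection is $\mathscr{B}_{a,b}^+(\zeta)$; divergent modulus sum then forces this intersection to be a single point.

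\emph{Counting, and the main obstacle.} The combinatorial data of a periodic critical tableau is finite, so periodic tableaux, and hence Mandelbrot-copy components, are countable; the non-periodic tableaux form an uncountable Cantor-like subset of the symbolic space of admissible tableaux, giving uncountably many point components. The genuinely hard step is the parameter-space half of the non-periodic case: realizing the dynamical annular moduli as honest parameter moduli. This requires combining the clover description of $\mathscr{L}^+(\zeta)$ from \cite[Corollary 13.3]{BH88} with a $\lambda$-lemma argument to certify that the pulled-back puzzle in parameter space inherits the moduli of its dynamical counterpart. Once that dictionary is in place, the rigidity of the point components is immediate from the divergence estimate.
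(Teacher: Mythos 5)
The paper does not actually prove this theorem: it is quoted verbatim from Branner--Hubbard \cite[Theorems 5.2 and 9.1]{BH92}, and the rest of the article uses it as a black box. Your outline is a reasonable high-level reconstruction of how Branner--Hubbard's argument goes --- the tableau dichotomy, quadratic-like renormalization and straightening in the periodic case, and divergence of annular moduli forcing point components in the non-periodic case --- but two steps do not survive scrutiny as you have written them.

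First, the Branner--Hubbard puzzle in this one-escaping-critical-orbit setting is built purely from equipotentials, not from ``external rays landing at repelling preperiodic points.'' Since the Julia set is disconnected and $a$ escapes, external rays do not land in the usual sense; the actual puzzle is cut out by the level curves $\{h_{a,b}=h_{a,b}(a)/3^n\}$, which are figure-eight-type curves passing through the preimages of the escaping critical point, and the puzzle pieces are the bounded components these curves enclose. This is more than a cosmetic correction: both the quadratic-like restriction $P_{a,b}^{\circ k}\colon U\to V$ and the tableau machinery are read off from this equipotential puzzle, and the combinatorics are quite different from the Yoccoz ray-puzzle you seem to be picturing.

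Second, and more seriously, the transfer from dynamical moduli to parameter moduli is not correct as you state it. You assert the construction ``yields a nested family of topological disks in parameter space with the same moduli,'' justified by a $\lambda$-lemma for holomorphic motions. Holomorphic motions do not preserve moduli; the $\lambda$-lemma gives only a quasiconformal extension whose dilatation deteriorates as the parameter moves away from the base point, so you would have no uniform control across an infinite nest of annuli, which is exactly what the divergence argument requires. What Branner--Hubbard actually establish in \cite{BH92} is a quantitative comparison between the moduli of the dynamical ``pattern'' annuli and the moduli of the corresponding ``parapattern'' annuli, proved via the universal-pattern and wringing machinery from \cite{BH88}; divergence in parameter space is inherited because of this explicit multiplicative bound, not because the moduli are equal. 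You correctly flag this as the hard step, but the $\lambda$-lemma is the wrong tool to close it; without the parapattern comparison the rigidity of the point components does not follow.

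Your periodic-case renormalization and the counting argument (finite combinatorial data for periodic tableaux giving countably many Mandelbrot copies; a Cantor set of admissible non-periodic tableaux giving uncountably many points) are in line with \cite{BH92}, once the puzzle construction is corrected as above.
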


Recall that $\mathscr{L}^+(\zeta)$ is defined in \eqref{equ:L-plus}.
The following result shows that each boundary point in the non-degenerated connected component of $\mathscr{B}^+(\zeta)$ is accumulated by the degenerated connected component of $\mathscr{B}^+(\zeta)$.

\begin{prop}\label{prop:accum}
Let $\mathscr{B}_{a,b}^+(\zeta)$ be a non-degenerated connected component of $\mathscr{B}^+(\zeta)$, where $\zeta\in\C\setminus\overline{\D}$. For any open set $U$ in $\mathscr{L}^+(\zeta)$ which intersects $\partial\mathscr{B}_{a,b}^+(\zeta)$, there exists a degenerated connected component of $\mathscr{B}^+(\zeta)$ contained in $U$.
\end{prop}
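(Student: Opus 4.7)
My plan is to produce a degenerated component inside $U$ as the limit of a shrinking nested sequence of open sets, each chosen to avoid a further Mandelbrot copy in an enumeration.

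I would first use the density of parabolic parameters on $\partial\mathscr{B}_{a,b}^+(\zeta)$---guaranteed by Theorem \ref{thm:pt-or-copy}(a), which identifies $\mathscr{B}_{a,b}^+(\zeta)$ with the Mandelbrot set $M$, and by the classical density of parabolic parameters on $\partial M$ due to Douady--Hubbard---to pick a parabolic parameter $(a_0,b_0)\in U\cap\partial\mathscr{B}_{a,b}^+(\zeta)$. The central supporting fact, obtained by transferring the wake/decoration structure for $M$ via Branner--Hubbard's parameter-space straightening, is that in every neighborhood of any parabolic boundary parameter of any Mandelbrot copy $M^n\subset\mathscr{B}^+(\zeta)$ there sit infinitely many distinct baby Mandelbrot copies of $\mathscr{B}^+(\zeta)$, each carrying its own parabolic boundary parameters.

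Enumerate the countable family of Mandelbrot copies of $\mathscr{B}^+(\zeta)$ as $\{M^n\}_{n\geq 1}$. Inductively, I build open sets $U\supset V_1\Supset V_2\Supset V_3\Supset\cdots$ with $\diam V_k<2^{-k}$, together with a parabolic parameter $(a_k,b_k)\in V_k\cap\partial M^{m_k}$ for some $m_k$, subject to the constraint that $\overline{V_k}\cap M^k=\emptyset$ for every $k$. At step $k+1$, the supporting fact above applied at $(a_k,b_k)$ gives infinitely many distinct baby Mandelbrot copies accumulating at $(a_k,b_k)$; choose one, $M^{m_{k+1}}$, different from $M^{k+1}$, pick a parabolic point on $\partial M^{m_{k+1}}$ inside $V_k$, and shrink to a sub-neighborhood $V_{k+1}\Subset V_k$ disjoint from the closed set $M^{k+1}$. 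The intersection $\bigcap_k\overline{V_k}$ is a single point $(a^*,b^*)\in U$; it lies in $\mathscr{B}^+(\zeta)$ because each $V_k$ meets $\mathscr{B}^+(\zeta)$ and $\mathscr{B}^+(\zeta)$ is closed in $\mathscr{L}^+(\zeta)$, and it lies outside every $M^n$ by the disjointness condition. Theorem \ref{thm:pt-or-copy}(b) then places $(a^*,b^*)$ in a degenerated component of $\mathscr{B}^+(\zeta)$ contained in $U$.

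The main obstacle is establishing the supporting density fact for $\mathscr{L}^+(\zeta)$. One must combine Branner--Hubbard's polynomial-like surgery with Douady--Hubbard's wake structure for $M$ to verify that secondary tuning in the straightened coordinates really produces new baby copies of $\mathscr{B}^+(\zeta)$ distinct from the one being perturbed, and that these copies (with their own parabolic boundary parameters) truly accumulate at every parabolic point of $\partial M^n$. Once this is verified, the inductive construction and the limit argument proceed straightforwardly.
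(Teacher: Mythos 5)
Your proposal takes a genuinely different route from the paper, and it also contains a genuine gap in its central step. The paper itself is aware of your route: after its own proof it remarks that ``applying patterns and parapatterns \ldots\ one can show that \ldots\ $U$ contains another smaller Mandelbrot copy'' --- exactly the supporting fact your argument needs --- but explicitly declines to pursue it ``to avoid the long definitions.'' Instead the paper runs a self-contained perturbation argument: pick a parabolic parameter $(a_0,b_0)\in U\cap\partial\mathscr{B}^+_{a,b}(\zeta)$, observe that its Julia set contains a repelling cycle $\{z^1_{a_0,b_0},\dots,z^k_{a_0,b_0}\}$ of period $k\geq 4$ each of whose points is a \emph{singleton} Julia component, continue this cycle holomorphically on a small neighborhood $U'$ with the critical point $-a$ staying outside the components $K_{a,b}(z^i_{a,b})$, and then use that $(a_0,b_0)$ lies in the bifurcation locus to produce $(a,b)\in U'\setminus\{(a_0,b_0)\}$ with $P^{\circ N}_{a,b}(-a)\in K_{a,b}(z^i_{a,b})$ for some $i$ and $N$. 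That forces $K_{a,b}(-a)$ to be non-periodic, and Theorem \ref{thm:pt-or-copy}(b) finishes.

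The gap in your version is that the ``central supporting fact'' --- that near every parabolic boundary point of a Mandelbrot copy there accumulate infinitely many distinct \emph{components} of $\mathscr{B}^+(\zeta)$ that are themselves Mandelbrot copies --- is stated but not established. You identify it as the main obstacle and sketch how one might derive it from Branner--Hubbard straightening plus Douady--Hubbard wakes, but this is precisely the substantive content that would need to be proven. Note in particular that what comes cheaply from McMullen-type universality is the presence of a baby Mandelbrot copy \emph{as a subset} of $\mathscr{B}^+(\zeta)$, which may well be a proper subset of a single non-degenerate component and hence useless for your diagonalization; upgrading to ``a distinct full component $M^{m}$ with $m\neq n$'' really does require the structure theory of parapatterns from \cite{BH92}. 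Granting the supporting fact, the rest of your nested-disc construction is correct: $\mathscr{B}^+(\zeta)=\{h_{a,b}(-a)=0\}$ is closed in $\mathscr{L}^+(\zeta)$, the copies $M^n$ are compact, distinct components are disjoint, so one can shrink to avoid $M^{k+1}$ at stage $k+1$, and the limit point lies in $\mathscr{B}^+(\zeta)$ but in no $M^n$. So the logic downstream of the supporting fact is sound; what is missing is the supporting fact itself, which is why the paper opted for the more elementary perturbation approach that needs only holomorphic motion of a repelling cycle of singleton Julia components and the definition of the bifurcation locus.
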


\begin{proof}
This proposition can be proved by using the relation of the structures of ``\textit{patterns}" (corresponding to dynamical planes) and ``\textit{parapatterns}" (corresponding to parameter spaces) introduced in \cite{BH92}. However, to avoid the long definitions we prefer to use the idea of perturbation here.

Let $U$ be an open set in $\mathscr{L}^+(\zeta)$ such that $U\cap\partial\mathscr{B}_{a,b}^+(\zeta)\neq\emptyset$ for some $(a,b)\in\mathscr{B}^+(\zeta)$, where $\mathscr{B}_{a,b}^+(\zeta)$ is non-degenerated. Since $\mathscr{B}_{a,b}^+(\zeta)$ is homeomorphic to the Mandelbrot set $M$, by the density of the parabolic parameters on $\partial M$, there exists a point $(a_0,b_0)\in U\cap\partial\mathscr{B}_{a,b}^+(\zeta)$ such that $P_{a_0,b_0}$ has a parabolic periodic point $z_0$. Note that the critical point $-a$ is contained in the parabolic basin of $z_0$.

The Julia set of $P_{a_0,b_0}$ consists of countably many quasiconformal copies of the Julia set of a parabolic polynomial $z\mapsto z^2+c$ and uncountably many point components. In particular, there exists a repelling periodic orbit
\begin{equation*}
X_{a_0,b_0}:=\{z_{a_0,b_0}^1,z_{a_0,b_0}^2,\cdots,z_{a_0,b_0}^k\}
\end{equation*}
with period $k\geq 4$ such that $P_{a_0,b_0}(z_{a_0,b_0}^i)=z_{a_0,b_0}^{i+1}$ $(i\in\Z/k\Z)$ and each $z_{a_0,b_0}^i$ is a Julia component of $P_{a_0,b_0}$.

Note that $\mathscr{L}^+(\zeta)\subset\C^2$ is a Riemann surface (satisfying $\varphi_{a,b}(-2a)=\zeta$ for fixed $|\zeta|>1$) which is isomorphic to the unit disk \cite[Corollary 13.3]{BH88} and
\begin{equation}\label{equ:holo-fami}
P_{a,b}:\mathscr{L}^+(\zeta)\times\C\to\C, \quad ((a,b),z)\mapsto P_{a,b}(z)
\end{equation}
is a holomorphic family of cubic polynomials. There exists an open neighborhood $U'\subset U$ of $(a_0,b_0)$ in $\mathscr{L}^+(\zeta)$ and a repelling periodic orbit $Y_{a,b}$ of $P_{a,b}$ with period $k$ such that
\begin{enumerate}
\item $Y_{a,b}$ moves holomorphically as $(a,b)\in U'$ and $Y_{a_0,b_0}=X_{a_0,b_0}$;
\item For $(a,b)\in U'$, the set $X_{a,b}=\{z_{a,b}^1,z_{a,b}^2,\cdots,z_{a,b}^k\}$ satifies $P_{a,b}(z_{a,b}^i)=z_{a,b}^{i+1}$ $(i\in\Z/k\Z)$; and
\item The critical point $-a$ is not contained in the filled-in Julia component of $P_{a,b}$ containing $z_{a,b}^i$ for all $i=1,\cdots,k$.
\end{enumerate}

Since $(a_0,b_0)$ is contained in the bifurcation locus of $P_{a,b}$ and $\sharp\,Y_{a,b}=k\geq 4$, there exist $(a,b)\in U'\setminus\{(a_0,b_0)\}$ and $N\geq 1$ such that $P_{a,b}^{\circ N}(-a)\in K_{a,b}(z_{a,b}^i)$ for some $i\in\Z/k\Z$. By the property of $z_{a,b}^i$, it means that the filled-in Julia component $K_{a,b}(-a)$ is non-periodic. According to Theorem \ref{thm:pt-or-copy}(b), the Julia set of $P_{a,b}$ is a Cantor set and $\mathscr{B}_{a,b}^+(\zeta)$ is a point.
\end{proof}

Proposition \ref{prop:accum} implies that the point components of $\mathscr{B}^+(\zeta)$ (corresponding to Cantor Julia sets) are dense in $\partial \mathscr{B}^+(\zeta)$. Actually, applying ``patterns" and ``parapatterns" in \cite{BH92}, one can show that for any non-degenerated component $\mathscr{B}_{a,b}^+(\zeta)$ and any open set $U$ in $\mathscr{L}^+(\zeta)$ with $U\cap \partial\mathscr{B}_{a,b}^+(\zeta)\neq\emptyset$, the set $U$ contains another smaller Mandelbrot copy (a component of $\partial \mathscr{B}^+(\zeta)$) which is different from $\mathscr{B}_{a,b}^+(\zeta)$.

\section{Hausdorff dimensions under the parabolic bifurcations}\label{sec-Shi}

In this section we shall use Shishikura's result on the Hausdorff dimension of the Julia sets near the parabolic parameters to prove the following result:

\begin{thm}\label{thm-dense}
The parameters such that the Julia sets of $P_{a,b}$ are Cantor sets having Hausdorff dimension two are dense in $\partial\mathscr{B}^+(\zeta)$.
\end{thm}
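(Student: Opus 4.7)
The plan is to mimic the inductive construction sketched for the Main Theorem, but started inside an arbitrary open neighbourhood of the prescribed boundary point. Fix $(a_*,b_*)\in\partial\mathscr{B}^+(\zeta)$ and an open set $V\subset\mathscr{L}^+(\zeta)$ containing $(a_*,b_*)$; the goal is to produce a point $(a^*,b^*)\in V\cap\partial\mathscr{B}^+(\zeta)$ whose Julia set is a Cantor set of Hausdorff dimension two. I would begin by enumerating the countably many non-degenerate components (Mandelbrot copies) of $\mathscr{B}^+(\zeta)$ furnished by Theorem \ref{thm:pt-or-copy} as $\{M^j\}_{j\ge 1}$ and setting $U_0:=V$.

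Inductively, I would construct parabolic parameters $(a_n,b_n)\in U_{n-1}\cap\partial\mathscr{B}^+(\zeta)$ and open sets $U_n\Subset U_{n-1}$ with: (i) $(a_n,b_n)\notin M^1\cup\cdots\cup M^n$; (ii) $\overline{U_n}\cap(M^1\cup\cdots\cup M^n)=\emptyset$; (iii) $\diam U_n\le 1/n$; and (iv) $\hdim J(P_{a,b})\ge 2-1/n$ for every $(a,b)\in U_n$. The choice of $(a_n,b_n)$ rests on two standard inputs: parabolic parameters of the holomorphic family \eqref{equ:holo-fami} are dense in its bifurcation locus $\partial\mathscr{B}^+(\zeta)$ (a classical consequence of holomorphic motion theory, essentially the perturbation argument already used in the proof of Proposition \ref{prop:accum}), and by Theorem \ref{thm:pt-or-copy}(a) every such parabolic parameter lies on the boundary of some $M^j$; meanwhile the remark following Proposition \ref{prop:accum} guarantees that every open subset of $\mathscr{L}^+(\zeta)$ meeting $\partial\mathscr{B}^+(\zeta)$ contains infinitely many distinct Mandelbrot copies, so the finite list $M^1,\ldots,M^n$ can always be avoided. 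Shishikura's Theorem \ref{thm-shishi} then supplies a punctured neighbourhood of $(a_n,b_n)$ on which (iv) holds; intersecting it with a small open ball of diameter at most $1/n$ disjoint from the closed set $M^1\cup\cdots\cup M^n$ (possible by (i)) yields $U_n$ with all four properties.

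Passing to the limit, the nested intersection $\bigcap_n\overline{U_n}$ collapses to a single point $(a^*,b^*)\in V$. Property (iv) forces $\hdim J(P_{a^*,b^*})=2$, which rules out the shift locus because every hyperbolic polynomial with non-empty Fatou set has Julia dimension strictly less than two. Property (ii) rules out every $M^j$, so $(a^*,b^*)$ must be a degenerate (point) component of $\mathscr{B}^+(\zeta)$; in particular $(a^*,b^*)\in\partial\mathscr{B}^+(\zeta)\cap V$, and Theorem \ref{thm:pt-or-copy}(b) ensures that $J(P_{a^*,b^*})$ is a Cantor set. The main obstacle I anticipate is the combinatorial step of certifying, inside any open set meeting $\partial\mathscr{B}^+(\zeta)$, the presence of a Mandelbrot copy outside a prescribed finite list; this is precisely where the Branner--Hubbard parapattern machinery of \cite{BH92}, stronger than the pure accumulation statement in Proposition \ref{prop:accum}, is genuinely needed.
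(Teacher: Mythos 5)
Your proposal is essentially the paper's own argument: Shishikura's Theorem \ref{thm-shishi} (filtered through Lemma \ref{lema:nbd}) supplies a disk of parameters with large Hausdorff dimension next to each parabolic parameter, you nest such disks while steering away from the finitely many Mandelbrot copies $M^1,\dots,M^n$ already enumerated, and you start the induction inside the prescribed open set $V$ to get density. The one place you diverge is the step where you certify a fresh Mandelbrot copy outside the prohibited finite list: you appeal to the unproved remark following Proposition \ref{prop:accum} (iterated to yield infinitely many copies), whereas the paper takes a cleaner route — it applies Proposition \ref{prop:accum} to locate a degenerate (point) component inside $U_n$, shrinks to a neighbourhood $U_n'$ disjoint from $M^1,\dots,M^n$, and then invokes McMullen's universality theorem \cite{McM00} at that bifurcation point to produce a genuine Mandelbrot copy in $U_n'\cap\mathscr{B}^+(\zeta)$. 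McMullen's theorem gives you the needed copy from a citable result rather than from the heuristic parapattern claim; this is worth adopting, since you yourself flag the remark-based step as the main obstacle. The remaining small embellishments in your write-up — enforcing $\diam U_n\le 1/n$ so the intersection is a single point, and the closing argument that $(a^*,b^*)$ must lie in $\mathscr{B}^+(\zeta)$ because a shift-locus (hyperbolic) parameter has Julia dimension strictly below two — are both correct and consistent with what the paper does implicitly.
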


For a rational map, recall that the \textit{\textit{\textit{parabolic basin}}} $\MA(z_0)$ of a parabolic periodic point $z_0$ is the set of points which are iterated to $z_0$ in a neighborhood of the initial points. The \textit{immediate parabolic basin} of $z_0$ is the union of periodic connected components of $\MA(z_0)$.
The following theorem is a weak version which has been proved in \cite[Theorem 2]{Shi98}.

\begin{thm}[{Shishikura}]\label{thm-shishi}
Suppose that a rational map $f_0$ of degree $d\geq 2$ has a parabolic fixed point $z_0$ with multiplier $e^{2\pi\ii p/q}$ ($p$, $q\in\Z$, $(p,q)=1$) and that the immediate parabolic basin of $z_0$ contains only one critical point of $f_0$. Then for any $\varepsilon>0$ and $b>0$, there exist a neighborhood $\MN$ of $f_0$ in the space of rational maps of degree $d$, a neighborhood $V$ of $z_0$ in $\EC$, positive integers $N_1$ and $N_2$ such that if $f\in\MN$, and if $f$ has a fixed point in $V$ with multiplier $e^{2\pi\ii\alpha}$, where
\begin{equation}\label{equ:beta-dom}
q\alpha=p\pm\frac{1}{a_1\pm\frac{1}{a_2+\beta}}
\end{equation}
with integers $a_1\geq N_1$, $a_2\geq N_2$ and $\beta\in\C$, $0\leq\re\beta<1$, $|\im\beta|\leq b$, then
\begin{equation*}
\textup{H-dim}(J(f))>2-\varepsilon.
\end{equation*}
\end{thm}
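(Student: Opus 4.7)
The plan is the inductive ``nested neighborhood'' construction from the introduction, combining three ingredients: the Branner-Hubbard dichotomy (Theorem~\ref{thm:pt-or-copy}), the density of new Mandelbrot copies in the boundary of a given non-degenerated component (the strengthened Proposition~\ref{prop:accum} recorded just after its proof), and Shishikura's parabolic-implosion dimension estimate (Theorem~\ref{thm-shishi}). Fix $(a_0,b_0)\in\partial\mathscr{B}^+(\zeta)$ and an arbitrary open neighborhood $U_0\subset\mathscr{L}^+(\zeta)$ of $(a_0,b_0)$. By Theorem~\ref{thm:pt-or-copy}(b), any parameter with Cantor Julia set is automatically a point component of $\mathscr{B}^+(\zeta)$ and lies in $\partial\mathscr{B}^+(\zeta)$; so it is enough to produce $(a^*,b^*)\in U_0$ with $J(P_{a^*,b^*})$ a Cantor set of Hausdorff dimension two. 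After a preliminary application of the strengthened Proposition~\ref{prop:accum}, we may arrange that $U_0$ meets $\partial M^0$ for some non-degenerated component $M^0$.

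The induction produces open sets $U_n\Subset U_{n-1}$, pairwise distinct Mandelbrot copies $M^n\subset U_{n-1}$, and parabolic parameters $(a_n,b_n)\in\partial M^n\cap U_{n-1}$, such that $\hdim(J(P_{a,b}))>2-1/n$ for every $(a,b)\in U_n$ and $U_n$ meets $\partial M^n$. For the step: the strengthened Proposition~\ref{prop:accum} applied to $U_{n-1}$ produces $M^n\subset U_{n-1}$ distinct from the previous copies; density of parabolic parameters on $\partial M^n\cong\partial M$ yields $(a_n,b_n)\in\partial M^n\cap U_{n-1}$ with a parabolic cycle of some period $k$ absorbing the critical component $K_{a_n,b_n}(-a_n)$. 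The quadratic-like renormalization on this component makes $-a_n$ the unique critical point of $P_{a_n,b_n}$ in the immediate parabolic basin; since $+a_n$ escapes to infinity, the iterate $f_0:=P_{a_n,b_n}^{\circ k}$ satisfies Shishikura's single-critical-point hypothesis, and Theorem~\ref{thm-shishi} with $\varepsilon=1/n$ supplies a neighborhood $\MN$ of $f_0$ in rational maps of degree $3^k$ together with arithmetic data $N_1,N_2,b$.

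The crux is the transfer to the slice. The persisting $k$-cycle has holomorphic multiplier $\lambda(a,b)=(P_{a,b}^{\circ k})'(z(a,b))$, and $\lambda$ is non-constant at $(a_n,b_n)$ because $(a_n,b_n)\in\partial M^n$ lies on the bifurcation locus of the one-parameter family \eqref{equ:holo-fami} restricted to the slice (otherwise the cycle would remain persistently parabolic on an open subset of $\mathscr{L}^+(\zeta)$, contradicting the location of $(a_n,b_n)$). Hence $\lambda$ is an open map at $(a_n,b_n)$. For fixed integers $a_1\geq N_1,\,a_2\geq N_2$ the map $\beta\mapsto e^{2\pi\ii\alpha(\beta)}$ defined by \eqref{equ:beta-dom} is non-constant holomorphic in $\beta$, so the image $\Omega$ of the admissible $\beta$-strip is open in $\C$. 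The preimage $\lambda^{-1}(\Omega)\cap\{(a,b):P_{a,b}^{\circ k}\in\MN\}$ is therefore an open subset of $\mathscr{L}^+(\zeta)$ accumulating at $(a_n,b_n)$ on which Theorem~\ref{thm-shishi} yields $\hdim(J(P_{a,b}))>2-1/n$; take $U_n$ to be a sufficiently small open subset of this locus contained in $U_{n-1}$. A further application of the strengthened Proposition~\ref{prop:accum} near another parabolic point on $\partial M^n$ ensures that $U_n$ still meets $\partial M^n$, keeping the induction alive.

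Any $(a^*,b^*)\in\bigcap_{n\geq 1}\overline{U_n}$ (nonempty by nested compactness) has $\hdim(J(P_{a^*,b^*}))\geq 2-1/n$ for all $n$, hence equals $2$; since $(a^*,b^*)$ avoids every $M^n$, the component $\mathscr{B}^+_{a^*,b^*}(\zeta)$ cannot be a Mandelbrot copy, and Theorem~\ref{thm:pt-or-copy}(b) identifies $J(P_{a^*,b^*})$ as a Cantor set. The principal obstacle is the openness of the multiplier map on the one-dimensional slice: one must verify non-constancy of $\lambda$ at each parabolic $(a_n,b_n)$ as a function on $\mathscr{L}^+(\zeta)$ itself, not merely on the full two-parameter family of monic centered cubics. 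A subsidiary technicality is Shishikura's single-critical-point hypothesis for $f_0=P_{a_n,b_n}^{\circ k}$, handled by the quadratic-like renormalization structure of $M^n$ combined with the escape of $+a_n$.
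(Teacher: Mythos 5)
The statement under review is Theorem \ref{thm-shishi} itself---Shishikura's estimate that a fixed point whose multiplier $e^{2\pi\ii\alpha}$ satisfies the two-level continued-fraction condition \eqref{equ:beta-dom} forces $\hdim(J(f))>2-\varepsilon$. Your proposal does not prove this statement: it lists Theorem \ref{thm-shishi} as one of its ``three ingredients'' and uses it as a black box inside a nested-neighborhood construction. As an argument for the assigned statement it is therefore vacuous (it assumes exactly what is to be shown); what it actually proves is the paper's Theorem \ref{thm-dense} and the Main Theorem. Note that the paper does not prove Theorem \ref{thm-shishi} either---it is quoted verbatim as a weak version of \cite[Theorem 2]{Shi98}---so the correct ``proof'' here is a citation, and anything beyond that would have to reproduce Shishikura's analysis: Fatou coordinates and Ecalle--Voronin horn maps for the perturbed parabolic point, the iterated (two-stage, reflecting the two entries $a_1,a_2$) parabolic implosion as $a_1,a_2\to\infty$, and the construction of hyperbolic subsets of $J(f)$ whose dimension tends to $2$, giving the lower bound on the hyperbolic dimension mentioned in the Remark. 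None of that machinery appears in your write-up, so the key ideas of the theorem's proof are entirely missing.

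For what it is worth, the argument you did write tracks the paper's proof of Theorem \ref{thm-dense} closely, and even tightens two points the paper treats briskly: the non-constancy (hence openness) of the multiplier map restricted to the one-dimensional slice $\mathscr{L}^+(\zeta)$, which is needed to realize multipliers in the region of Lemma \ref{lema:nbd}, and the verification of Shishikura's single-critical-point hypothesis via the quadratic-like renormalization of $K_{a,b}(-a)$ together with the escape of $+a$. Those observations would be valuable as a review of the proof of Theorem \ref{thm-dense}, but they do not address the statement you were asked to prove.
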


\begin{rmk}
The original statement of Theorem \ref{thm-shishi} in \cite{Shi98} is stronger: the dimension inequality holds for the \textit{hyperbolic dimension} of $f$, which is a lower bound the Hausdorff dimension of the Julia set of $f$.
\end{rmk}

For any fixed $p,q\in\Z$ with $(p,q)=1$, we define
\begin{equation*}
\Omega_{p,q}:=\{\alpha\in\C:\alpha \text{ satisfies \eqref{equ:beta-dom}}\}.
\end{equation*}
The following lemma can be obtained by a direct calculation. See Figure \ref{Fig_Collar}.

\begin{lema}\label{lema:nbd}
The interior of $\Omega_{p,q}$ is an open set consisting of countable many open disks $(V_n)_{n\in\N}$  which satisfies $V_n\cap\R\neq\emptyset$ for all $n\in\N$, $\diam(V_n)\to 0$ and $V_n\to p/q$ as $n\to\infty$.
\end{lema}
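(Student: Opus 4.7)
\medskip\noindent\textbf{Proof proposal.}
The plan is to exhibit $\Omega_{p,q}$ as a countable union of M\"{o}bius images of a single half-strip and to read off the disk decomposition and the convergence properties from that description. For each integer $a_1\geq N_1$ and each pair of signs $\epsilon_1,\epsilon_2\in\{+1,-1\}$, introduce the M\"{o}bius transformation
$$T_{a_1,\epsilon_1,\epsilon_2}(w):=\frac{p}{q}+\frac{\epsilon_1}{q(a_1+\epsilon_2/w)}.$$
Writing $w=a_2+\beta$ sets up a bijection between the closed half-strip $S:=\{w\in\C:\re w\geq N_2,\ |\im w|\leq b\}$ and the parameter set of admissible pairs $(a_2,\beta)$ in \eqref{equ:beta-dom}, so
$$\Omega_{p,q}=\bigcup_{a_1,\epsilon_1,\epsilon_2}T_{a_1,\epsilon_1,\epsilon_2}(S).$$

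Since the pole $w=-\epsilon_2/a_1$ of $T_{a_1,\epsilon_1,\epsilon_2}$ has real part $\mp 1/a_1$, it lies outside $\overline S$ as soon as $N_2\geq 2$. Hence the extension $T_{a_1,\epsilon_1,\epsilon_2}:\EC\to\EC$ sends the closed Jordan region $S\cup\{\infty\}\subset\EC$ homeomorphically onto a closed Jordan region in $\C$, whose interior I denote by $V_{a_1,\epsilon_1,\epsilon_2}$; this is already an open topological disk. The next step is to verify that the countable family $(V_{a_1,\epsilon_1,\epsilon_2})$ is pairwise disjoint and exhausts $\Int(\Omega_{p,q})$. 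For $\epsilon_1=\pm 1$ the disks sit on opposite sides of $p/q$; for fixed $\epsilon_1$ but $\epsilon_2=\pm 1$ they lie on opposite sides of $p/q+\epsilon_1/(qa_1)$ because $\re(\epsilon_2/w)$ has sign $\epsilon_2$ on $S$; and for different values of $a_1$ the centers $p/q+\epsilon_1/(qa_1)$ are separated by a gap of order $1/a_1^2$ while the disk radii are of order $1/(qN_2 a_1^2)$, so taking $N_1,N_2$ sufficiently large (permitted in Theorem \ref{thm-shishi}) forces the disks apart. Exhaustion is then automatic, since every point of $\Omega_{p,q}$ not lying in some $V_{a_1,\epsilon_1,\epsilon_2}$ is the image of a boundary point of $S$, hence on the Jordan boundary of the corresponding piece and therefore not an interior point of $\Omega_{p,q}$.

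For the quantitative conclusions, the one-term expansion
$$T_{a_1,\epsilon_1,\epsilon_2}(w)=\frac{p}{q}+\frac{\epsilon_1}{qa_1}-\frac{\epsilon_1\epsilon_2}{qa_1^2\, w}+O\!\left(\frac{1}{qa_1^3|w|^2}\right),$$
valid uniformly for $w\in S$, places $V_{a_1,\epsilon_1,\epsilon_2}$ inside a disk of radius $O(1/(qN_2 a_1^2))$ around $p/q+\epsilon_1/(qa_1)$. Enumerating the countable family as $(V_n)_{n\in\N}$ so that the underlying $a_1(n)\to\infty$ immediately yields $\diam(V_n)\to 0$ and $V_n\to p/q$. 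Finally, $T_{a_1,\epsilon_1,\epsilon_2}$ has real coefficients, so the real arc $T_{a_1,\epsilon_1,\epsilon_2}([N_2,+\infty))$ is contained in $V_{a_1,\epsilon_1,\epsilon_2}\cap\R$, giving the required real point in each disk.

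The main obstacle is the middle paragraph: confirming disjointness and exhaustion reduces to a mild uniqueness statement for the continued-fraction-type representation \eqref{equ:beta-dom} combined with the size-versus-gap estimates just discussed, both of which require $N_1,N_2$ to be taken large enough. Everything else is a routine consequence of the fact that M\"{o}bius transformations send Jordan regions to Jordan regions plus the explicit asymptotics of $T_{a_1,\epsilon_1,\epsilon_2}$.
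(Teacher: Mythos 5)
The paper gives no proof of Lemma \ref{lema:nbd} --- it is waved off as ``a direct calculation'' with a pointer to Figure \ref{Fig_Collar} --- so you were reconstructing an argument the author chose to omit, and your reconstruction is the natural one: parametrize $\Omega_{p,q}$ as a countable union of M\"obius images $T_{a_1,\epsilon_1,\epsilon_2}(S)$ of a fixed half-strip $S$, check that the pole $-\epsilon_2/a_1$ misses $\overline S$, take $V_{a_1,\epsilon_1,\epsilon_2}=T_{a_1,\epsilon_1,\epsilon_2}(\Int S)$, and read off the diameter and location estimates from the expansion of $T$ near $w=\infty$. Two details deserve a touch more care. First, $N_2\in\partial S$, so the real arc you place inside $V_{a_1,\epsilon_1,\epsilon_2}$ is $T_{a_1,\epsilon_1,\epsilon_2}((N_2,\infty))$ rather than $T_{a_1,\epsilon_1,\epsilon_2}([N_2,\infty))$; the conclusion $V_n\cap\R\ne\emptyset$ is of course unaffected. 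Second, the exhaustion step --- that a point of $T(\partial S)$ is ``therefore not an interior point of $\Omega_{p,q}$'' --- is not purely formal: it rests on the disjointness of the closed pieces away from the cusp points $T(\infty)=p/q+\epsilon_1/(qa_1)$ at which $T_{a_1,\epsilon_1,+1}(S)$ and $T_{a_1,\epsilon_1,-1}(S)$ touch, and one must also observe that these cusp points are themselves not in $\Omega_{p,q}$ (equality $\epsilon_1/a_1=\epsilon_1'/(a_1'+\epsilon_2'/w')$ with $w'\in S$ would force $a_1'+\epsilon_2'/w'=\pm a_1$, which is impossible because $|\epsilon_2'/w'|\le 1/N_2<1$ while $a_1,a_1'$ are positive integers). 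You correctly flag that the disjointness estimate needs $N_1,N_2$ large; that is harmless because the quantifiers in Theorem \ref{thm-shishi} allow it. With these small patches the argument is complete and is exactly the intended ``direct calculation.''
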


\begin{figure}[!htpb]
  \setlength{\unitlength}{1mm}
  \centering
  \includegraphics[width=0.8\textwidth]{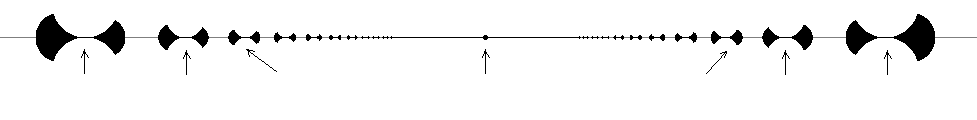}
  \put(-96,1){\small{$-\tfrac{1}{N_1}$}}
  \put(-88,1){\small{$-\tfrac{1}{N_1+1}$}}
  \put(-77,1){\small{$-\tfrac{1}{N_1+2}$}}
  \put(-11,1){\small{$\tfrac{1}{N_1}$}}
  \put(-23,1){\small{$\tfrac{1}{N_1+1}$}}
  \put(-32,1){\small{$\tfrac{1}{N_1+2}$}}
  \put(-51.7,1){\small{$0$}}
  \caption{The domain $\Omega_{p,q}$ of $\alpha$, where $p=0$ and $q=1$.}
  \label{Fig_Collar}
\end{figure}

\begin{proof}[{Proof of Theorem \ref{thm-dense} and the Main Theorem}]
We fix $\zeta\in\C\setminus\overline{\D}$. By Theorem \ref{thm:pt-or-copy}, $\mathscr{B}^+(\zeta)$ consists of countably many non-degenerated components which are the copies of the Mandelbrot set. We label them by $(M^n)_{n\geq 1}$.

Let $P_{a_1,b_1}$ be a parabolic polynomial in $M^1$ such that $P_{a_1,b_1}$ has a parabolic periodic point. According to Theorem \ref{thm-shishi} and Lemma \ref{lema:nbd}, there exists an open disk $U_1$ in $\mathscr{L}^+(\zeta)$ such that $U_1\cap \partial M^1\neq\emptyset$ and the Julia sets of all $P_{a,b}$ in $U_1$ have Hausdorff dimension at least $1$.

By Proposition \ref{prop:accum}, there exists a degenerated connected component $(a_1',b_1')$ of $\mathscr{B}^+(\zeta)$ contained in $U_1$. Let $U_1'\subset U_1$ be an open neighborhood of $(a_1',b_1')$ such that $U_1'\cap M^1=\emptyset$. Note that $(a_1',b_1')$ is contained in the bifurcation locus of \eqref{equ:holo-fami}. According to \cite{McM00}, $U_1'\cap \mathscr{B}^+(\zeta)$ contains a copy of the Mandelbrot set (this copy may be a proper subset of a connected component of $\mathscr{B}^+(\zeta)$).

Next we choose any parabolic polynomial $P_{a_2,b_2}$ (which is not in $M^1$) in $U_1'$. Still by Theorem \ref{thm-shishi} and Lemma \ref{lema:nbd}, there is a disk $U_2\Subset U_1'$ in $\mathscr{L}^+(\zeta)$ such that $U_2\cap \partial \mathscr{B}^+(\zeta)\neq\emptyset$ and the Julia sets of all $P_{a,b}$ in $U_2$ have Hausdorff dimension at least $3/2$. Inductively, one can choose a sequence of parabolic parameters $(a_n,b_n)_{n\geq 1}\subset\partial\mathscr{B}^+(\zeta)$ and a sequence of nested disks $(U_n)_{n\geq 1}$ in $\mathscr{L}^+(\zeta)$ with $U_n\Subset U_{n-1}$ such that $(a_n,b_n)$ is not in $M^1\cup\cdots\cup M^{n-1}$, $U_n\cap \partial \mathscr{B}^+(\zeta)\neq\emptyset$, and the Julia sets of all $P_{a,b}$ in $U_n$ have Hausdorff dimension at least $2-1/n$.

Note that $(U_n)$ are nested open disks and $U_n\Subset U_{n-1}$ for all $n$. Hence $\cap_{n\geq 1} U_n=\cap_{n\geq 1} \overline{U}_n$ is nonempty. Therefore, we can obtain a point $(a^*,b^*)\in\cap_{n\geq 1} U_n$ such that the Julia set of $P_{a^*,b^*}$ has Hausdorff dimension two. By the choice of $(a_n,b_n)$, the point $(a^*,b^*)$ is not contained in any copies of the Mandelbrot set of $\mathscr{B}^+(\zeta)$. According to Theorem \ref{thm:pt-or-copy}, the Julia set of $P_{a^*,b^*}$ must be a Cantor set.

Let $U$ be any open set in $\mathscr{L}^+(\zeta)$ such that $U\cap \partial\mathscr{B}^+(\zeta)\neq\emptyset$.
By Proposition \ref{thm:pt-or-copy} and \cite{McM00}, the parabolic parameters are dense in $\partial\mathscr{B}^+(\zeta)$. Hence there exists a parabolic parameter $(a_1,b_1)\in U$. According to Lemma \ref{lema:nbd}, the open disk $U_1$ in the proof above can be chosen such that $U_1\subset U$. Then we have $(a^*,b^*)\in U$. This ends the proof of Theorem \ref{thm-dense} and hence the Main Theorem.
\end{proof}

\section{Some other examples}\label{sec-exam}

In this section we generalize the Main Theorem in the introduction to some higher degree polynomials and some rational maps.

\subsection{Higher degree polynomials}

For $d\geq 3$, we define
\begin{equation*}
Q_{a,b}(z)=d\int_0^z(w-a)^{d-2}(w+(d-2)a)\,\textup{d}w+b, \text{ where }a,b\in\C^*.
\end{equation*}
It is easy to check that $Q_{a,b}$ is a monic and centered polynomial with degree $d\geq 3$, which has two critical points $a$ and $-(d-2)a$ with local degrees $d-1$ and $2$ respectively. In particular, $Q_{a,b}=P_{a,b}$ if $d=3$ and $Q_{a,b}(z)=z^4-6a^2z^2+8a^3 z+b$ if $d=4$.

\begin{thm}
For any $d\geq 3$, there exist polynomials with degree $d$ whose Julia sets are Cantor sets with Hausdorff dimension two.
\end{thm}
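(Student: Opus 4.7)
The plan is to mirror the proof of the Main Theorem, replacing $P_{a,b}$ by $Q_{a,b}$ throughout. First I would define the potential $h_{a,b}(z)=\lim_{k\to\infty}d^{-k}\log_+|Q_{a,b}^{\circ k}(z)|$ and the B\"ottcher coordinate $\varphi_{a,b}$ at $\infty$, which conjugates $Q_{a,b}$ to $\zeta\mapsto\zeta^d$. The map $Q_{a,b}$ has two critical points: $a$ with local degree $d-1$ and $-(d-2)a$ with local degree $2$. Since $Q_{a,b}(z)-Q_{a,b}(a)$ is monic of degree $d$, centered, and has $a$ as a zero of multiplicity $d-1$, the remaining root (the co-critical point of $a$) is $-(d-1)a$. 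I then set up the analogous slice
\begin{equation*}
\mathscr{L}^+(\zeta):=\{(a,b)\in(\C^*)^2:\lim_{z\to-(d-1)a}\varphi_{a,b}(z)=\zeta,\ h_{a,b}(-(d-2)a)<h_{a,b}(a)\}
\end{equation*}
for $\zeta\in\C\setminus\overline{\D}$, and its subset $\mathscr{B}^+(\zeta)$ of parameters for which the simple critical point $-(d-2)a$ has bounded orbit.

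Next I would check that the two structural results used in Section \ref{sec-Shi} extend. The Branner-Hubbard slice theorem goes through because its proof only needs a well-defined external coordinate at the co-critical point of the escaping critical orbit; this foliates the relevant part of the escape locus into analytic leaves, each isomorphic to a disk. The dichotomy of Theorem \ref{thm:pt-or-copy} extends because the captured critical point $-(d-2)a$ is \emph{simple}, so its periodic puzzle pieces admit genuine quadratic-like renormalization, whence non-degenerated components of $\mathscr{B}^+(\zeta)$ are homeomorphic to $M$ and non-periodic components $K_{a,b}(-(d-2)a)$ force $J(Q_{a,b})$ to be a Cantor set exactly as before. The density statement in Proposition \ref{prop:accum} is then proved by the same perturbation argument: density of parabolic parameters on $\partial M$ produces a parabolic polynomial in any open set meeting $\partial\mathscr{B}^+(\zeta)$, a long-period repelling cycle outside the captured critical orbit moves holomorphically, and a small perturbation inside the bifurcation locus captures $Q_{a,b}^{\circ N}(-(d-2)a)$ into a filled-in Julia component of that cycle, producing a non-periodic component.

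With these ingredients, the inductive construction of Section \ref{sec-Shi} transfers verbatim. Shishikura's Theorem \ref{thm-shishi} applies to rational maps of every degree $d\geq 2$, so starting from a parabolic parameter in the first Mandelbrot copy of $\mathscr{B}^+(\zeta)$ I would build a nested sequence of disks $U_n\Subset U_{n-1}$ in $\mathscr{L}^+(\zeta)$ with $U_n\cap\partial\mathscr{B}^+(\zeta)\neq\emptyset$ and $\hdim(J(Q_{a,b}))\geq 2-1/n$ throughout $U_n$, at each stage using the extended Proposition \ref{prop:accum} (together with \cite{McM00}) to land outside the previously listed Mandelbrot copies and Lemma \ref{lema:nbd} plus Shishikura's theorem to pass to the next disk. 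Any $(a^*,b^*)\in\bigcap_n U_n$ lies outside every Mandelbrot copy in $\mathscr{B}^+(\zeta)$, so by the extended dichotomy $J(Q_{a^*,b^*})$ is a Cantor set, and by construction it has Hausdorff dimension two. The main obstacle I foresee is verifying that the Branner-Hubbard pattern/parapattern machinery, and in particular the combinatorics of puzzle pieces around the escaping critical point, survives when that critical point has local degree $d-1>2$; this is essentially bookkeeping, since only the degree of the external escape map changes, but the verification of quasiconformal Mandelbrot copies and the transversality of the holomorphic motion used in Proposition \ref{prop:accum} must be redone with this higher ramification in mind.
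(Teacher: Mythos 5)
Your proposal is correct and follows essentially the same route as the paper, which simply invokes \cite[\S12]{BH92} (where Branner and Hubbard develop patterns and parapatterns precisely for the family $Q_{a,b}$ with one simple critical point and one of maximal multiplicity) to transfer Theorem \ref{thm:pt-or-copy} and then reruns the Main Theorem's argument. The ``main obstacle'' you flag at the end---redoing the pattern/parapattern bookkeeping with the higher-ramification escaping critical point---is exactly what that reference already supplies, so no new verification is needed.
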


\begin{proof}
The polynomial $Q_{a,b}$ can be studied completely similarly as the cubic polynomials $P_{a,b}$ by patterns and parapatterns. See \cite[\S12]{BH92}. In particular, Theorem \ref{thm:pt-or-copy} holds for $Q_{a,b}$. Then applying a similar argument as $P_{a,b}$ one can obtain some Cantor Julia sets of $Q_{a,b}$ with Hausdorff dimension two.
\end{proof}

In general, if one has obtained some special properties of the Julia sets of polynomials (such as full Hausdorff dimension or positive area etc.), then a natural idea to generalize the results to higher degree rational maps is to use \textit{polynomial-like mapping} theory (see \cite{DH85}). However, for Cantor Julia sets this theory cannot be used directly.

\subsection{Generalized McMullen maps}

Let us consider the following generalized McMullen family
\begin{equation*}
f_{a,b}(z)=z^n+\frac{a^2}{z^n}+b, \text{ where } n\geq 2 \text{ and } a,b\in\C^*.
\end{equation*}
This family was introduced in \cite{BDGR08} and has been studied thoroughly in \cite{XQY14}. The critical points of $f_{a,b}$ are $c_k^\pm:=\pm\sqrt[n]{a}\,e^{2\pi\ii k/n}$ $(k=1,\cdots,n)$ and the critical values of $f_{a,b}$ are $v_\pm=b\pm 2a$. Hence $f_{a,b}$ has exactly two independent critical orbits essentially.

Note that $f_{a,b}$ has a super-attracting fixed point at $\infty$, $f_{a,b}^{-1}(\infty)=\{0,\infty\}$ and the local degrees of $f_{a,b}$ at $0$ and $\infty$ are both $n$. In order to construct Cantor Julia sets, we assume that the immediate super-attracting basin of $\infty$ contains a critical value $v^+$ or $v^-$. According to B\"{o}ttcher's theorem, there exists a unique conformal isomorphism $\varphi_{a,b}$ defined in a neighborhood of $\infty$ which is tangent to the identity at $\infty$ such that $\varphi_{a,b}$ conjugates $f_{a,b}$ to $\zeta\mapsto\zeta^n$. Moreover, $\varphi_{a,b}$ can be extended to a conformal isomorphism
\begin{equation*}
\varphi_{a,b}:V_{a,b}\to \C\setminus\overline{\D}_r,
\end{equation*}
where $r>1$ such that $\partial V_{a,b}$ contains the critical points $\{c_k^+:$ $k=1$, $\cdots$, $n\}$ or $\{c_k^-:$ $k=1$, $\cdots$, $n\}$. The complement $\C\setminus\overline{V}_{a,b}$ is a Jordan disk containing $0$ and satisfying $f_{a,b}(\C\setminus\overline{V}_{a,b})=f_{a,b}(V_{a,b})\cup\{\infty\}$. See Figure \ref{Fig_XQY-J}.

\begin{figure}[!htpb]
  \setlength{\unitlength}{1mm}
  \centering
  \includegraphics[width=0.45\textwidth]{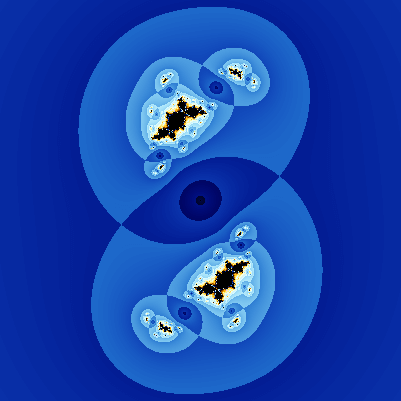}\hskip0.3cm
  \includegraphics[width=0.45\textwidth]{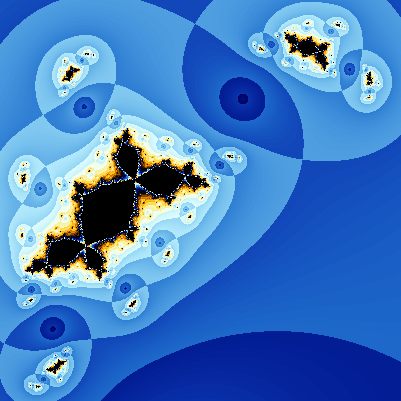}
  \caption{The Julia set of $f_{a,b}$, where $n=2$ and the parameter $(a,b)$ is chosen such that $f_{a,b}$ has exactly one escaped critical value.}
  \label{Fig_XQY-J}
\end{figure}

The potential function
\begin{equation*}
h_{a,b}(z)=\lim_{k\to\infty}\frac{1}{n^k}\log_+|f_{a,b}^{\circ k}(z)|
\end{equation*}
is defined\footnote{Actually $h_{a,b}$ can be defined in $\C\setminus\cup_{n\in\N}f_{a,b}^{-n}(\infty)$. In order to obtain the similar structures of the dynamical planes and parameter spaces as cubic polynomials, it will be convenient to ignore the definition of the potential on the preimages of $\C\setminus\overline{V}_{a,b}$.} in $\C\setminus\cup_{n\in\N}f_{a,b}^{-n}(\C\setminus\overline{V}_{a,b})$. Without loss of generality, we assume that $v_+\in f_{a,b}(\partial V_{a,b})$ and $h_{a,b}(v_-)<h_{a,b}(v_+)$.

\begin{thm}
If the Julia set of $f_{a,b}$ is a Cantor set, then the area of this Julia set is zero.
Moreover, there exists $(a,b)\in\C^*\times\C^*$ such that the Julia set of $f_{a,b}$ is a Cantor set having Hausdorff dimension two.
\end{thm}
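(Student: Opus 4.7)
The zero-area claim can be handled exactly as in Branner-Hubbard's proof of \cite[Theorem 5.9]{BH92} for cubic polynomials, transported to the generalized McMullen setting via the puzzle-piece and tableau analysis of \cite{XQY14}. If $J(f_{a,b})$ is Cantor then no component of the non-escaping set can be non-trivial: any such component would be a quadratic-like copy of a connected quadratic filled-in Julia set and so connected, contradicting total disconnectedness. Hence every component is a point. The tableau around the (point) component containing $v_-$ produces a nest of annuli whose sum of moduli diverges, forcing that component to have area zero; the remaining point components already have measure zero, so the whole Julia set has zero area.

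For the Hausdorff dimension statement I would follow the strategy of this paper's Main Theorem verbatim. Fix $\zeta\in\C\setminus\overline{\D}$ and introduce the slice
\begin{equation*}
\mathscr{L}^+(\zeta):=\{(a,b)\in\C^*\times\C^*:\varphi_{a,b}(c_0^+)=\zeta \text{ and } h_{a,b}(v_-)<h_{a,b}(v_+)\},
\end{equation*}
together with the subset $\mathscr{B}^+(\zeta):=\{(a,b)\in\mathscr{L}^+(\zeta):h_{a,b}(v_-)=0\}$. By \cite{XQY14}, $\mathscr{L}^+(\zeta)$ is a Riemann surface isomorphic to the unit disk, and the Branner-Hubbard dichotomy (Theorem \ref{thm:pt-or-copy}) transfers: each connected component of $\mathscr{B}^+(\zeta)$ is either a single point (giving a Cantor Julia set) or homeomorphic to the Mandelbrot set. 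The accumulation statement of Proposition \ref{prop:accum} also transfers, by exactly the same holomorphic motion of a high-period repelling cycle in the Julia set of a parabolic polynomial sitting on the copy's boundary.

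With these ingredients the inductive construction of \S\ref{sec-Shi} goes through unchanged: pick a parabolic parameter $(a_1,b_1)$ in the first Mandelbrot copy, apply Theorem \ref{thm-shishi} and Lemma \ref{lema:nbd} to obtain a disk $U_1\subset\mathscr{L}^+(\zeta)$ on which $\hdim J(f_{a,b})\geq 1$, perturb into a point component of $\mathscr{B}^+(\zeta)$, invoke \cite{McM00} to locate a smaller Mandelbrot copy inside $U_1$, select another parabolic parameter on its boundary, and iterate. The nested intersection $\bigcap_n U_n$ then contains a parameter $(a^*,b^*)$ lying outside every Mandelbrot copy of $\mathscr{B}^+(\zeta)$, whose Julia set is consequently a Cantor set of Hausdorff dimension $2$.

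The main technical point is verifying the hypothesis of Theorem \ref{thm-shishi} that the immediate parabolic basin contain exactly one critical point. Because $f_{a,b}$ enjoys the $\Z/n\Z$-symmetry $z\mapsto e^{2\pi\ii/n}z$, all $n$ critical points $c_k^-$ map to the common critical value $v_-$, so one might worry that several of them land in the same immediate basin. However, a parabolic periodic orbit on the boundary of a Mandelbrot copy in $\mathscr{B}^+(\zeta)$ arises from the quadratic-like renormalization around $v_-$ and is not itself fixed by the symmetry; its symmetry-orbit therefore produces $n$ distinct parabolic orbits whose immediate basins are permuted cyclically, and each basin contains exactly one of the $c_k^-$. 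Applying Theorem \ref{thm-shishi} to any single one of these orbits then yields the required dimension lower bound. Equivalently, one can pass to the quotient rational map $G(w)=(w+a^2/w+b)^n$ on the coordinate $w=z^n$, which carries a genuine parabolic fixed point with a single critical point in its immediate basin, and pull the dimension estimate back through the degree-$n$ semi-conjugacy $z\mapsto z^n$.
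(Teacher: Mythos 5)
Your overall strategy is the same as the paper's: transport the Branner--Hubbard structure theory for the slice $\mathscr{L}^+(\zeta)$ and the set $\mathscr{B}^+(\zeta)$ to the family $f_{a,b}$ via \cite{XQY14}, obtain the point/Mandelbrot-copy dichotomy, and then run the inductive Shishikura perturbation argument of \S\ref{sec-Shi} verbatim. The zero-area part via divergent tableaux is also what the paper points to in \cite[Theorems 5.9 and 12.6]{BH92}. However, there are two concrete problems with what you have written.

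First, a factual discrepancy: you assert that $\mathscr{L}^+(\zeta)$ is a Riemann surface isomorphic to the unit disk. This is the situation for cubic polynomials, where $\mathscr{L}^+(\zeta)$ is a leaf of a clover \cite[Corollary 13.3]{BH88}, but for $f_{a,b}$ the paper states that the corresponding $\mathscr{L}^+(\zeta)$ is a Riemann surface with infinitely many boundary components, reflecting the more intricate structure of $V_{a,b}$. This does not break your argument, since the construction is local, but you should not carry over the disk statement.

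Second, and more seriously, the argument you give to verify the one-critical-point hypothesis of Theorem~\ref{thm-shishi} is incorrect. The rotation $\sigma(z)=e^{2\pi\ii/n}z$ satisfies $f_{a,b}\circ\sigma=f_{a,b}$, not $\sigma\circ f_{a,b}=f_{a,b}\circ\sigma$; it is a deck transformation of $f_{a,b}$, not a conjugacy of the dynamics. Consequently $\sigma$ does \emph{not} carry a periodic orbit to a periodic orbit: if $z_0\mapsto z_1\mapsto\cdots$ is the parabolic cycle, then $f_{a,b}(\sigma z_0)=f_{a,b}(z_0)=z_1$, so $\sigma z_0$ is merely a preimage of $z_1$, not a new parabolic point. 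There is only one parabolic cycle, not $n$, and the claimed ``$n$ distinct parabolic orbits whose immediate basins are permuted cyclically'' do not exist. The correct reason that the immediate basin contains exactly one critical point is different: the $n$ critical points $c_k^-$ lie in $n$ distinct filled-Julia components, all mapping in one step to $K_{a,b}(v_-)$; since a periodic cycle of components has a unique predecessor of $K_{a,b}(v_-)$ inside the cycle, exactly one of the components $K_{a,b}(c_k^-)$ is periodic and the rest are strictly preperiodic. The escaping critical points $c_k^+$ and the preimages $0,\infty$ of $\infty$ are excluded, so the immediate parabolic basin meets exactly one critical point of $f_{a,b}$. Your alternative route through the quotient $G(w)=(w+a^2/w+b)^n$ would need its own verification of the critical-point hypothesis for $G$ (which has more critical points than $f_{a,b}$) and of the identity $J(f_{a,b})=p^{-1}(J(G))$ for $p(z)=z^n$; as written it is asserted rather than proved.
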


\begin{proof}
We only give a sketch of the proof here since the idea is parallel to that of cubic polynomials in \cite{BH92}. The definitions of $h_{a,b}$ and $\varphi_{a,b}$ above for $f_{a,b}$ are sufficient for us to define Branner-Hubbard puzzles as cubic polynomials. In particular, patterns and parapatterns can be defined similarly. Unlike the definition of $\mathscr{L}^+(\zeta)$ for cubic polynomials, for $f_{a,b}$ we define
\begin{equation*}
\mathscr{L}^+(\zeta):=\{(a,b)\in\C^2: \varphi_{a,b}(v_+)=\zeta \text{ and }h_{a,b}(v_-)<h_{a,b}(v_+)\}.
\end{equation*}
One can show that $\mathscr{L}^+(\zeta)$ is a Riemann surface with infinitely many boundary components.
Similar to the case of cubic polynomials, we define
\begin{equation*}
\mathscr{B}^+(\zeta):=\{(a,b)\in\mathscr{L}^+(\zeta): h_{a,b}(v_-)=0\}.
\end{equation*}

\begin{figure}[!htpb]
  \setlength{\unitlength}{1mm}
  \centering
  \includegraphics[width=0.45\textwidth]{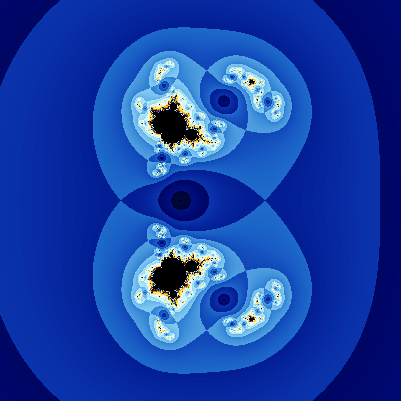}\hskip0.3cm
  \includegraphics[width=0.45\textwidth]{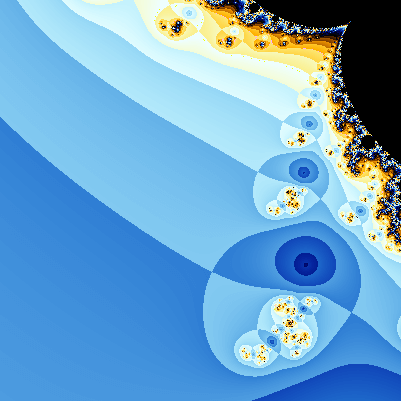}
  \caption{The space $\mathscr{L}^+(\zeta)$ and its zoom for some $\zeta>1$, where $n=2$. The set $\mathscr{B}^+(\zeta)\subset\mathscr{L}^+(\zeta)$ consists of countably many copies of the Mandelbrot set and uncountably many points.}
  \label{Fig_XQY-M}
\end{figure}

For the dynamical planes, a similar statement to Theorem \ref{thm:pt-or-copy} for $f_{a,b}$ has been proved in \cite[Theorem 4.1]{XQY14}.
Applying the relation between parameter spaces and dynamical planes as in \cite{BH92}, one can obtain that each component of $\mathscr{B}^+(\zeta)$ is either a point or a copy of the Mandelbrot set (see Figure \ref{Fig_XQY-M}). The statement on the zero area can be proved similarly as \cite[Theorems 5.9 and 12.6]{BH92}. The existence of Cantor Julia sets of $f_{a,b}$ with full dimension can be proved similarly as the Main Theorem.
\end{proof}



\begin{thebibliography}{99}

\bibitem[AL08]{AL08} A. Avila and  M. Lyubich, Hausdorff dimension and conformal measures of Feigenbaum Julia sets, \textit{J. Amer. Math. Soc.} \textbf{21} (2008), no. 2, 305-363.

\bibitem[AL15]{AL15} A. Avila and  M. Lyubich, Lebesgue measure of Feigenbaum Julia sets, arXiv: math.DS/ 1504.02986v1, 2015.

\bibitem[BDGR08]{BDGR08}P. Blanchard, R. L. Devaney, A. Garijo and E. D. Russell, A generalized version of the McMullen domain, \textit{Internat. J. Bifur. Chaos Appl. Sci. Engrg.} \textbf{18} (2008), no. 8, 2309-2318.

\bibitem[BH88]{BH88} B. Branner and  J. H. Hubbard, The iteration of cubic polynomials I, The global topology of parameter space, \textit{Acta Math.} \textbf{160} (1988), no. 3-4, 143-206.

\bibitem[BH92]{BH92} B. Branner and  J. H. Hubbard, The iteration of cubic polynomials II, Patterns and parapatterns, \textit{Acta Math.} \textbf{169} (1992), no. 3-4, 229-325.

\bibitem[BZ96]{BZ96} O. Bodart and M. Zinsmeister, Quelques r\'{e}sultats sur la dimension de Hausdorff des ensembles de Julia des polyn\^{o}mes quadratiques, \textit{Fund. Math.} \textbf{151} (1996), no. 2, 121-137.

\bibitem[DK88]{DK88} R. L. Devaney and L. Keen, Dynamics of tangent, in \textit{Dynamical Systems}, Lecture Notes in Math., \textbf{1342}, Springer, Berlin, 1988, pp. 105-111.

\bibitem[DH85]{DH85} A. Douady and J. H. Hubbard, On the dynamics of polynomial-like mappings, \textit{Ann. Sci. \'{E}cole Norm. Sup.} \textbf{18} (1985), no. 4, 287-343.

\bibitem[Fat20]{Fat20} P. Fatou, Sur les \'{e}quations fonctionnelles, \textit{Bull. Soc. Math. France}, \textbf{47} (1919), 161-271; \textbf{48} (1920), 33-94 and 208-314.

\bibitem[Gar78]{Gar78} V. Garber, On the iteration of rational functions, \emph{Math. Proc. Camb. Phil. Soc.} \textbf{84} (1978), 497-505.

\bibitem[Jul18]{Jul18} G. Julia, M\'{e}moire sur l'it\'{e}ration des fonctions rationnelles, \textit{J. Math. Pures Appl.} (7) \textbf{4}, 1918, 47-245.

\bibitem[Kee86]{Kee86} J. Keesling, Hausdorff dimension, \textit{Topo. Proc.} \textbf{11} (1986), 349-383.

\bibitem[Kos10]{Kos10} L. Koss, Cantor Julia sets in a family of even elliptic functions, \textit{J. Difference Equ. Appl.} \textbf{16} (2010), no. 5-6, 675-688.

\bibitem[McM87]{McM87} C. T. McMullen, Area and Hausdorff dimension of Julia sets of entire functions, \textit{Trans. Amer. Math. Soc.} \textbf{300} (1987), no. 1, 329-342.

\bibitem[McM00]{McM00} C. T. McMullen, The Mandelbrot set is universal, in \textit{The Mandelbrot Set, Theme and Variations}, London Math. Soc. Lecture Note Ser. \textbf{274}, Cambridge Univ. Press, Cambridge, 2000, pp. 1-17.

\bibitem[Mil93]{Mil93} J. Milnor, Geometry and dynamics of quadratic rational maps, with an appendix by the author and Lei Tan, \textit{Experiment. Math.} \textbf{2} (1993), no. 1, 37-83.

\bibitem[Shi98]{Shi98} M. Shishikura, The Hausdorff dimension of the boundary of the Mandelbrot set and Julia sets, \emph{Ann. of Math.} \textbf{147} (1998), no. 2, 225-267.

\bibitem[Sta94]{Sta94} G. Stallard, The Hausdorff dimension of Julia sets of meromorphic functions, \textit{J. London Math. Soc.} \textbf{49} (1994), no. 2, 281-295.

\bibitem[XQY14]{XQY14} Y. Xiao, W. Qiu and Y. Yin, On the dynamics of generalized McMullen maps, \textit{Ergodic Theory Dynam. Systems} \textbf{34} (2014), no. 6, 2093-2112.

\bibitem[Yin92]{Yin92} Y. Yin, On the Julia sets of quadratic rational maps, \textit{Complex Variables Theory Appl.} \textbf{18} (1992), no. 3-4, 141-147.

\bibitem[YZ10]{YZ10} Y. Yin and  Y. Zhai, No invariant line fields on Cantor Julia sets, \textit{Forum Math.} \textbf{22} (2010), no. 1, 75-94.

\bibitem[Zha13]{Zha13} Y. Zhai, On the dimensions of Cantor Julia sets of rational maps, \textit{J. Math. Anal. Appl.} \textbf{402} (2013), no. 2, 772-780.
\end{thebibliography}
\end{document}